\documentclass{article}

\usepackage[
  left=1.25in,
  right=1.25in,
  top=1.2in,
  bottom=1.2in
]{geometry}

\usepackage{amsmath,amssymb,amsthm}
\usepackage{mathtools}
\usepackage{hyperref}

\usepackage[
  style=numeric-comp,
  hyperref=true,
  doi=false,
  url=false,
  isbn=false,
  giveninits=true,
  sorting=none,
  block=none,
  backend=biber,
  maxnames=99
]{biblatex}

\newcommand{\C}{\mathbb{C}}
\newcommand{\tr}{\operatorname{tr}}
\newcommand{\frakg}{\mathfrak{g}}
\newcommand{\frakh}{\mathfrak{h}}
\newcommand{\xalg}{\mathfrak X_{\mathrm{alg}}}

\newtheorem{theorem}{Theorem}[section]
\newtheorem{proposition}[theorem]{Proposition}
\newtheorem{lemma}[theorem]{Lemma}
\newtheorem{remark}[theorem]{Remark}

\hypersetup{
  hidelinks,
  pdftitle={Integrable Generators of Polynomial Vector Fields for Complex Classical Lie Groups},
  pdfauthor={Yiyang Jiang and Xudong Chen}
}

\title{Integrable Generators of Polynomial Vector Fields for\\
Complex Classical Lie Groups}

\author{Yiyang Jiang$^*$ \quad and \quad
Xudong Chen\footnote{Y. Jiang and X. Chen are with the Department of Electrical \& Systems Engineering, Washington University, St. Louis, MO 63130, USA.
Emails: {\small\texttt{j.yiyang@wustl.edu, cxudong@wustl.edu}}. 
Corresponding author: Y. Jiang.}}

\date{}

\begin{document}

\maketitle

\begin{abstract}
We consider in this paper complex Lie groups $\mathrm{SL}_n(\C)$ for $n\geq 2$, $\mathrm{SO}_n(\C)$ for $n\geq 5$, and $\mathrm{Sp}_{2n}(\C)$ for $n\geq 1$.
We show that for any such Lie group, the Lie algebra of polynomial vector fields can be generated by three $\C$-complete polynomial vector fields.
\end{abstract}

\medskip

\section{Introduction and main result}
\label{sec:introduction}

Let $\mathrm{GL}_d(\C)$ be the general linear group of all invertible $d$-by-$d$ matrices over $\C$. 
Let $G\subseteq \mathrm{GL}_d(\C)$ be one of the following complex Lie groups: 
\begin{equation}\label{eq:listofgroups}
\begin{aligned}
\mathrm{SL}_n(\C) & := \{x\in \mathrm{GL}_n(\C) \mid \det(x) = 1\}, \quad \mbox{for } n \geq 2,\\
\mathrm{SO}_n(\C) & := \{x\in \mathrm{SL}_n(\C) \mid x^\top x = I \}, \quad \mbox{for } n \geq 5,\\
\mathrm{Sp}_{2n}(\C) & := \{x\in \mathrm{SL}_{2n}(\C) \mid x^\top \Omega x = \Omega\}, \quad \mbox{for } n \geq 1,
\end{aligned}
\end{equation} 
where $\Omega := [0, I_n; -I_n,0]$. 
We also list these groups in Table~\ref{tab:groups}, together with their types of the associated root systems.

A vector field $V$ on $G$ is viewed as a map from $G$ to $\C^{d\times d}$.  
The vector field $V$ is said to be {\it $\C$-complete} if its flow $e^{tV}: G\to G$ is well defined for every $t\in \C$. 
We use $\xalg(G)$ to denote the Lie algebra of polynomial vector fields on~$G$. Given a set $S$ of holomorphic vector fields, let $\operatorname{Lie}(S)$ be the Lie algebra generated by~$S$. In this paper, all vector spaces, Lie algebras, etc., are over~$\C$, unless otherwise stated. 

The main result of the paper is the following:

\begin{theorem}\label{thm:main}
For any complex Lie group $G$ in~\eqref{eq:listofgroups}, there exist three $\C$-complete polynomial vector fields $W_1$, $W_2$, and $W_3$ such that $\xalg(G)=\operatorname{Lie}\{W_1,W_2,W_3\}$.
\end{theorem}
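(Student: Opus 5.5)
The proof will use the standard decomposition of $\xalg(G)$ for an affine algebraic group. Let $\frakg$ be the Lie algebra of $G$ and $\C[G]$ its coordinate ring. Right-invariant vector fields $R_X(x)=Xx$, $X\in\frakg$, give a global frame. Hence
\[
\xalg(G)=\C[G]\otimes\frakg ,\qquad
[fR_X,gR_Y]=fg\,R_{[X,Y]}+f\,R_X(g)\,R_Y-g\,R_Y(f)\,R_X .
\]
Completeness will come from two building blocks. First, every $R_X$ is $\C$-complete, with flow $x\mapsto e^{tX}x$. Second, $fR_X$ is $\C$-complete whenever $R_X(f)=0$, since $f$ is then constant along the flow and the flow is $x\mapsto e^{tf(x)X}x$. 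In particular this holds when $X$ is nilpotent and $f$ is invariant under left multiplication by $\exp(\C X)$.

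\textbf{Step 1: reduce to generating $\frakg$ plus enough functions.} I will first show that if $L\subseteq\xalg(G)$ is a Lie subalgebra containing $R_\frakg$ and containing $fR_{E}$ for one fixed root vector $E$ and all $f$ in a generating set of $\C[G]$ (for instance the matrix entries $x_{ij}$), then $L=\xalg(G)$.

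The argument goes as follows. Bracketing with $R_Y$ gives $[R_Y,fR_E]=R_Y(f)R_E+fR_{[Y,E]}$. Since $\C[G]$ is a $\frakg$-module and $\frakg$ is simple, we can separate the two terms by using invariance of the $x_{ij}$-span and weight arguments with respect to a Cartan subalgebra $\frakh$. This yields $fR_X\in L$ for all $X\in\frakg$ and all $f$ in a finite-dimensional $\frakg$-stable space $F$ of functions.

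Products are then produced using a pair $E,E'$ with $[E,E']=0$ and $E\neq E'$:
\[
[fR_E,gR_{E'}]=f\,R_E(g)\,R_{E'}-g\,R_{E'}(f)\,R_E .
\]
Choosing $f,g$ with $R_{E'}(f)=0$ gives products $fR_E(g)$ times $R_{E'}$. I will use root vectors of $\mathrm{SL}_n$ acting on rows or columns to realize arbitrary monomials in the matrix entries. The exceptions $\mathrm{Sp}_2=\mathrm{SL}_2$ and $\mathrm{SL}_2$ itself, where commuting pairs are scarce, will be treated with $\frakh$ and $E$, using $[fR_H,gR_E]$ and weights.

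\textbf{Step 2: three complete generators.} By a known result, a complex simple Lie algebra is generated by two elements, and these can be taken as a regular semisimple $H$ (or a sum of root vectors) and a generic element. So $W_1=R_{A}$ and $W_2=R_{B}$, with $\operatorname{Lie}\{A,B\}=\frakg$, give $R_\frakg$, and both are complete.

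The third field will be $W_3=fR_E$ with $E$ a nilpotent root vector and $f$ a polynomial annihilated by $R_E$. A natural choice is a linear combination of entries $x_{ij}$ in rows or columns that are left-invariant under $\exp(tE)$ when $E$ is a matrix unit, adapted appropriately for the $\mathrm{SO}$ and $\mathrm{Sp}$ embeddings.

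Taking brackets of $W_3$ with $R_\frakg$ then generates $\C\text{-span}\{(\mathrm{ad}\,\frakg\text{-orbit})\}$, which contains $fR_X$ for $f$ in the $\frakg$-module generated by the chosen $f$. Since the matrix-entry representation of $\C[G]$ under left translation is a direct sum of copies of the standard module, this will contain all entries in the relevant columns. Right translations will be needed to reach other columns; to get them, I will let $W_1$ or $W_2$ be a left-invariant field, or use that $f$ itself involves several columns.

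\textbf{Main obstacle.} The hardest part will be Step 1's product mechanism combined with the irreducibility bookkeeping: one must show that $L$ contains $\C[G]\otimes\frakg$ and not just an $F\otimes\frakg$-type subspace. This means exhibiting the algebra structure of $\C[G]$ generated by the entries using only brackets. I will first prove the product closure in the $\mathrm{SL}_n$ case using commuting root vectors $E_{1n}, E_{1,n-1}$, and then handle $\mathrm{SO}_n$ ($n\ge5$ ensures commuting long-root pairs) and $\mathrm{Sp}_{2n}$ via their long root $\mathfrak{sl}_2$-subalgebras. For small cases I will do explicit computation.
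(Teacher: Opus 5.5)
Your Step 2 has a genuine gap. With $W_1=R_A$, $W_2=R_B$ and a shear $W_3=fR_E$ whose coefficient is a linear combination of matrix entries, the generated Lie algebra is always a proper subalgebra of $\xalg(G)$.

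Write $f(x)=\tr(xA)$. Since $\operatorname{span}G=\C^{d\times d}$, the shear condition $R_Ef(x)=\tr(xAE)=0$ forces $AE=0$. So $A$ is singular, and in the orthogonal case $\operatorname{rank}A\le d-2$, because a nonzero skew matrix $E$ has rank at least $2$. Hence there is a nonzero $Y\in\frakg$ with $YA=0$:
\begin{itemize}
\item for $\mathfrak{sl}_n$, take $uw^\top$ with $w^\top A=0$ and $w^\top u=0$;
\item for $\mathfrak{sp}_{2n}$, take $vv^\top\Omega$ with $v^\top\Omega A=0$;
\item for $\mathfrak{so}_d$, take $ab^\top-ba^\top$ with $a,b$ independent in $\ker A^\top$.
\end{itemize}
Then $L_Yf=\tr(xYA)=0$, so $[L_Y,W_3]=(L_Yf)R_E=0$, and $[L_Y,R_X]=0$ for every $X$. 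Thus $L_Y$ centralizes $\operatorname{Lie}\{W_1,W_2,W_3\}$. But $L_Y$ does not centralize $\frakg_L\subseteq\xalg(G)$, because $\frakg$ has trivial center. Equivalently, every generated field is invariant under right translation by the positive-dimensional stabilizer of $f$.

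This is exactly your ``other columns'' problem, and neither patch you mention fixes it. Letting $f$ involve several columns does not make $A$ invertible. Replacing $W_1$ or $W_2$ by a purely left-invariant field destroys the generation of $R_\frakg$, since $[R_A,L_B]=0$ and so $\operatorname{Lie}\{R_A,L_B\}$ is at most two-dimensional.

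The missing idea is the paper's Proposition~\ref{prop:firsttwo}: two complete fields of \emph{mixed} type, $W_1=L_H-R_{H'}$ and $W_2=L_P-R_P$. Here $H,H'\in\frakh$ are chosen so that the $4r$ numbers $\pm\alpha_i(H),\pm\alpha_i(H')$ are pairwise distinct, and $P=\sum_i(X_{\alpha_i}+X_{-\alpha_i})$. A Vandermonde argument on $\operatorname{ad}_{W_1}^mW_2$ then isolates every $L_{X_{\pm\alpha_i}}$ and $R_{X_{\pm\alpha_i}}$, giving all of $\frakg_L\oplus\frakg_R$.

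With both actions available, the paper takes $W_3=\chi L_Q$ with $\chi=\tr$ and $Q^2=0$. This is an overshear rather than a shear: $L_Q\chi\neq0$ but $L_Q^2\chi=\tr(xQ^2)=0$. That is what allows the coefficient matrix to be the invertible $I$. Since $L_Q$ commutes with $\frakg_R$, iterated brackets give $\tr(xX_1\cdots X_m)L_Q$. Burnside then yields all of $\Phi L_Q$, and the ideal argument yields $\Phi\frakg_L$.

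Your product mechanism in Step 1 is also only sketched. You do not show how the terms $f\,R_E(g)R_{E'}$, under the constraint $R_{E'}f=0$, fill out $\mathcal{A}_2(\Phi)\frakg$ and higher degrees. The paper instead invokes $\mathcal{L}_n=\mathcal{A}_n(\Phi)\frakg_L$ from its earlier work, whose only hypothesis is that $\Phi$ contains no nonzero constant function.
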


We will exhibit explicitly the three vector fields in the proof of the theorem, with the first two given in Section~\ref{sec:firsttwo} and third in Section~\ref{sec:third}. 

It has been shown in~\cite{TV} that every complex semi-simple Lie group~$G$ has the so-called {\it density property}, namely, the property that every holomorphic vector field $V$ on $G$ can be approximated arbitrarily well, with respect to the compact-open topology, by Lie combinations of all complete vector fields. 
Our result then implies that any such $V$ can be approximated by Lie combinations of a fixed, finite set of complete vector fields. We call this property the {\it strong density property}. 
As a consequence, the group of diffeomorphisms generated by the flows $e^{t W_i}$, for $i = 1,2,3$ and for $t\in \C$, is dense, with respect to the compact-open topology, in the identity component of the group of holomorphic diffeomorphisms.  

The strong density property for complex Lie groups has been investigated in~\cite{AndristSL} for the special case $\mathrm{SL}_2(\C)$, where the author has shown that $\xalg(\mathrm{SL}_2(\C))$ can be generated by four complete polynomial vector fields. In the same paper, the author has posed the following question: ``Can we find finitely many complete polynomial vector fields that generate the Lie algebra of all polynomial vector fields on other simple Lie groups than $\mathrm{SL}_2(\C)$?'' Our result thus provides an affirmative answer to this question. 

The strong density property has also been investigated in other complex affine varieties, including~$\C^n$ for $n\geq 2$~\cite{Andrist,BeldievNew} and Danielewski surfaces~\cite{AndristSL,And26}.   
The same property, but for the class of real, connected, semi-simple matrix Lie groups, has been addressed in~\cite{Real}, in which we have shown that the Lie algebra of algebraic vector fields over~$\mathbb{R}$ can be generated by three $\mathbb{R}$-complete vector fields.    

\section{The first two  vector fields}
\label{sec:firsttwo}

Let $\frakg$ be the Lie algebra of $G$. We use $\operatorname{ad}_X$ to denote the adjoint action, i.e., $\operatorname{ad}_X(Y) = [X, Y]$, for $X, Y\in \frakg$.  
Let $\frakg_L$ and $\frakg_R$ be the space of left- and right-invariant vector fields on $G$, respectively. We identify $\frakg$ with $\frakg_L$. For a given $X\in \frakg$, let $L_X(x):=xX$ and $R_X(x):= X x$ be the corresponding left- and right-invariant vector field, respectively. We have the following commutator relations: 
$$
[L_X,L_Y]=L_{[X,Y]},\qquad
[R_X,R_Y]=-R_{[X,Y]},\qquad
[L_X,R_Y]=0.
$$
It is clear that every left- or right-invariant vector field is algebraic. Also, for any $X,Y\in\frakg$, 
\begin{equation}\label{eq:invariant-flow}
e^{t(L_X+R_Y)}(x)=e^{tY}xe^{tX},
\end{equation}
for any $x\in G$ and any $t\in\C$. It follows that every vector field in $\frakg_L\oplus\frakg_R$ is $\C$-complete. 

\begin{table}[th]
\centering
\begin{tabular}{c|c}
$G$ & Type\\
\hline
$\mathrm{SL}_n(\C)$
& $A_{n-1}$, $n\geq2$\\

$\mathrm{SO}_{2r+1}(\C)$
& $B_r$, $r\geq2$\\

$\mathrm{Sp}_{2r}(\C)$
& $C_r$, $r\geq1$\\

$\mathrm{SO}_{2r}(\C)$
& $D_r$, $r\geq3$\\
\hline
\end{tabular}
\caption{Classical complex groups $G\subseteq \mathrm{GL}_d(\C)$ considered in the paper, together with the types of their root systems. The subindex of each type indicates the rank of the associated Lie algebra $\frakg$.}
\label{tab:groups}
\end{table}

We will now exhibit for each Lie group $G$ in Table~\ref{tab:groups} two vector fields $W_1$ and $W_2$ such that $\frakg_L \oplus \frakg_R = \operatorname{Lie}\{W_1, W_2\}$.   
In all cases, $W_1$ and $W_2$ take the following form:
\begin{equation}\label{eq:firsttwo}
W_1:=L_H-R_{H'}
\quad\text{and}\quad
W_2:=L_P-R_P,
\end{equation}
for some $H, H', P\in \frakg$. 
We present these three matrices for each type of~$\frakg$. Throughout the presentation, we use $E_{ij}\in \C^{d\times d}$ to denote the elementary matrix with~$1$ on the $ij$th entry and $0$ elsewhere. Further, let $S_{ij} := E_{ij} + E_{ji}$ and $\Omega_{ij}:= E_{ij} - E_{ji}$.    

\begin{description}
    \item[\it Type $A_{n-1}$ for $n \geq 2$:] The Lie algebra $\mathfrak{sl}_{n}(\C)$ is given by
    $$
    \mathfrak{sl}_n(\C)= \{ X\in \C^{n\times n} \mid \tr(X) = 0\}.
    $$
    We define 
    \begin{equation}\label{eq:typeAmatrices}
    \begin{aligned}
    H & := \sum_{i = 1}^n h_i E_{ii}, \quad \mbox{with } h_i:=\frac{1}{n}(2^{n+1}-2) -2^i, \\
    H'& :=  2^{n-1} H, \\
    P &:= \sum_{i = 1}^{n-1} S_{i,i+1}.
    \end{aligned}
    \end{equation}
    
    \item[\it Type $B_r$ for $r\geq 2$:] The Lie algebra $\mathfrak{so}_{d}(\C)$, with $d = 2r + 1$, is given by
    $$
    \mathfrak{so}_{d}(\C) = \{X\in \C^{d\times d} \mid X^\top + X = 0\}.
    $$
    We define 
    \begin{equation}\label{eq:typeBmatrices}
    \begin{aligned}
    H & := \sum_{i = 1}^r h_i \Omega_{i,d+1-i},\quad \mbox{with } h_i:= 2^{r+1} -2^i, \\
    H'& :=  2^r H, \\
    P &:= \sum_{i = 1}^{r} (\Omega_{i,i+1} - \Omega_{r+i,r+i+1}).
    \end{aligned}
    \end{equation}

\item[\it Type $C_r$ for $r \geq 1$:] 
The Lie algebra $\mathfrak{sp}_{2r}(\C)$ is given by
$$
\mathfrak{sp}_{2r}(\C)=\{X\in\C^{2r\times2r}\mid X^\top\Omega+\Omega X=0\},
$$
where we recall $\Omega= \sum_{i = 1}^r \Omega_{i, i + r}$.
We define
\begin{equation}\label{eq:typeCmatrices}
\begin{aligned}
H&:= \sum_{i = 1}^r h_i (E_{ii} - E_{i+r,i+r}), 
\quad \mbox{with } h_i:=3\cdot2^{r-1}-2^i,\\
H'&:=2^rH,\\
P&:=S_{r,2r} + \sum_{i=1}^{r-1}(S_{i,i+1}-S_{r+i+1,r+i}).
\end{aligned}
\end{equation}

\item[\it Type $D_r$ for $r\geq3$:]
The Lie algebra $\mathfrak{so}_{d}(\C)$, with $d=2r$, is given by
$$
\mathfrak{so}_{d}(\C)=\{X\in\C^{d\times d}\mid X^\top+X=0\}.
$$
We define
\begin{equation}\label{eq:typeDmatrices}
\begin{aligned}
H&:=\sum_{i = 1}^r h_i \Omega_{i,d+1-i},\quad \mbox{with } h_i:= 5\cdot 2^{r-2} -2^i, \\
H'&:=2^rH,\\
P&:=(\Omega_{r-1,r+1}-\Omega_{r,r+2}) + \sum_{i=1}^{r-1}(\Omega_{i,i+1}-\Omega_{2r-i,2r+1-i}).
\end{aligned}
\end{equation}
\end{description}

It is clear that in all cases the matrices $H$, $H'$, and $P$ belong to their respective Lie algebra, so the two vector fields $W_1$ and $W_2$ are well defined on~$G$.  
We now have the following proposition: 

\begin{proposition}\label{prop:firsttwo}
It holds that $\operatorname{Lie}\{W_1,W_2\}=\frakg_L\oplus\frakg_R$.
\end{proposition}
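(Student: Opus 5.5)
Since $\frakg_L\oplus\frakg_R$ is closed under brackets and contains $W_1,W_2$, the inclusion $\operatorname{Lie}\{W_1,W_2\}\subseteq\frakg_L\oplus\frakg_R$ is immediate, and the task is the reverse inclusion. The plan is to separate the left and right parts by a spectral argument, and then use the fact that a regular semisimple element together with a suitable element generates $\frakg$.

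\emph{Step 1 (iterated brackets).} Write $\mathcal L:=\operatorname{Lie}\{W_1,W_2\}$. From the commutator relations, for $k\ge 0$,
$$
\operatorname{ad}_{W_1}^k W_2 = L_{\operatorname{ad}_H^k P} - (-1)^k R_{\operatorname{ad}_{H'}^k P}\cdot(-1)^{k} \cdot(\pm 1),
$$
with the sign fixed by $[R_X,R_Y]=-R_{[X,Y]}$. Since $H'=cH$ with $c=2^{m}$ a power of two, $\operatorname{ad}_{H'}=c\,\operatorname{ad}_H$. Decompose $P=\sum_\alpha P_\alpha$ into eigenvectors of $\operatorname{ad}_H$ with eigenvalues $\alpha(H)$. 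In each type, $P$ is (up to the chosen basis) a sum of simple root vectors and their negatives, $P=\sum_{\alpha\in\Delta}(c_\alpha X_\alpha+c_{-\alpha}X_{-\alpha})$. Thus $\operatorname{ad}_{W_1}^k W_2=\sum_{\pm\alpha}\big(\alpha(H)^k L_{P_{\pm\alpha}}-(\mp\ldots)(c\,\alpha(H))^k R_{P_{\pm\alpha}}\big)$.

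A Vandermonde argument on the span of these iterates shows that $\mathcal L$ contains each $L_{P_{\beta}}$ and each $R_{P_{\beta}}$ separately, provided all the numbers $\{\pm\alpha(H)\}\cup\{\pm c\,\alpha(H)\}$, $\alpha$ simple, are pairwise distinct and nonzero. The choice of the $h_i$ as shifted powers of two is designed for exactly this. In type $A$, for instance, $\alpha_i(H)=h_i-h_{i+1}=2^i$, so the $L$-eigenvalues are $\pm2^i$ for $1\le i\le n-1$ and the $R$-eigenvalues are $\pm2^{n-1+i}$; these are all distinct. The other types behave similarly: one computes the simple-root values (for example, $2^i$ together with an extra value for the last root, such as $h_r$ or $h_{r-1}+h_r$ in type $D$) and checks that scaling by $2^r$ moves them into a disjoint range. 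In types $B$ and $D$ the basis is not diagonal, since $H$ is written with $\Omega_{i,d+1-i}$. Here I would first verify that $H$ is semisimple, and that the components of $P$ in the $\operatorname{ad}_H$-eigenspaces are root vectors for the simple roots and their negatives, relative to the Cartan subalgebra containing $H$.

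\emph{Step 2 (generation).} We now have $L_{X_{\pm\alpha}}$ and $R_{X_{\pm\alpha}}$ in $\mathcal L$ for all simple roots $\alpha$, with nonzero coefficients; in type $B$ one must check that each simple-root component actually appears. The Chevalley generators $\{X_{\pm\alpha}\}$ generate $\frakg$. Hence $L_{\frakg}\subseteq\mathcal L$ via $X\mapsto L_X$, which is a homomorphism, and $R_{\frakg}\subseteq\mathcal L$ via $X\mapsto R_X$, which is an antihomomorphism with the same image. This gives $\mathcal L=\frakg_L\oplus\frakg_R$.

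\emph{Main obstacle.} The main work lies in the type-by-type bookkeeping of Step 1 for types $B$, $C$ and $D$. One must identify the Cartan subalgebra and root vectors in the given non-standard coordinates, compute $\alpha(H)$ for each simple root, and confirm that the resulting $2\cdot2\cdot\operatorname{rank}$ eigenvalues are pairwise distinct. Particular care is needed for the last simple root in types $C$ and $D$, whose value (such as $2h_r$ or $h_{r-1}+h_r$) must avoid collisions with both the $2^i$ values and their $2^r$-scaled copies. I would first tabulate these values, then apply the Vandermonde inversion on the finite-dimensional span of the iterates $\operatorname{ad}_{W_1}^k W_2$.
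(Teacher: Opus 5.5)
Your plan is correct and is essentially the paper's argument. The iterates $\operatorname{ad}_{W_1}^k W_2 = L_{\operatorname{ad}_H^k P} - R_{\operatorname{ad}_{H'}^k P}$ (with no alternating sign, since $[L_H-R_{H'},L_A-R_B]=L_{[H,A]}-R_{[H',B]}$) are separated by a Vandermonde argument once the $4r$ numbers $\pm\alpha_i(H),\pm\alpha_i(H')$ are pairwise distinct, and the resulting $L_{X_{\pm\alpha_i}},R_{X_{\pm\alpha_i}}$ generate $\frakg_L\oplus\frakg_R$. For the bookkeeping you defer, the given $h_i$ yield $\alpha_i(H)=2^i$ (times $\mathrm{i}$ in types $B$, $D$) for every simple root, including $2h_r=2^r$ in type $C$ and $h_{r-1}+h_r=2^r$ in type $D$, so the $H'$-values are $2^{r+i}$ (resp.\ $2^{n-1+i}$ in type $A$) and distinctness is immediate.
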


\begin{proof}
Let $\frakh$ be a Cartan subalgebra of $\frakg$, and $\Delta$ be the root system associated with $(\frakg, \frakh)$. Choose a positive system $\Delta^+$ and let $\alpha_1,\ldots, \alpha_r$ be the simple roots, where $r$ is the rank of $\frakg$. Similar to the arguments of~\cite{Real,Kuranishi}, we have that if $H$, $H'$, and $P$ satisfy the following two items:
\begin{enumerate}
\item Matrices $H$ and $H'$ belong to $\frakh$. The $4r$ numbers  $\pm\alpha_i(H), \pm\alpha_i(H')$, for $1\leq i \leq r$,  are pairwise distinct;
\item There exist root vectors $X_{\alpha_i}$ and $X_{-\alpha_i}$, for $1\leq i\leq r$, such that $P = \sum_{i = 1}^r (X_{\alpha_i} + X_{-\alpha_i})$; 
\end{enumerate}
then one obtains by computation that
\begin{multline*}
\operatorname{ad}_{W_1}^m W_2 = \sum_{i = 1}^r \left (\alpha_i(H)^m L_{X_{\alpha_i}} + (-\alpha_i(H))^m L_{X_{-\alpha_i}} \right. \left. -  \alpha_i(H')^m R_{X_{\alpha_i}} - (-\alpha_i(H'))^m R_{X_{-\alpha_i}} \right ), 
\end{multline*}
for any $m \geq 0$ and hence, 
$$\{L_{X_{\alpha_i}}, L_{X_{-\alpha_i}}, R_{X_{\alpha_i}}, R_{X_{-\alpha_i}}\mid 1\leq i\leq r\}\subseteq \operatorname{Lie}\{W_1,W_2\}.$$
Since the set on the left hand side of the above equation generates the Lie algebra $\frakg_L\oplus \frakg_R$ (see, e.g.,~\cite[Section~14.2]{Humphreys}), we then have that $\operatorname{Lie}\{W_1,W_2\} = \frakg_L \oplus \frakg_R$. Thus, to establish the proposition, it suffices to verify that the matrices $H$, $H'$, and $P$ satisfy the two items above. We do this below.

\begin{description}
    \item[\it Validation for type $A_{n-1}$:] Let $\frakh$ be the space of trace-less diagonal matrices. Then, $\Delta = \{ e_i - e_j \mid 1\leq i\neq j \leq n\}$, where $e_i(A):= A_{ii}$ for any $A\in \frakh$. We choose the positive system such that $e_i - e_j\in \Delta^+$ if $i < j$. Then, the set of simple roots is $\{e_{i} - e_{i + 1} \mid 1\leq i \leq n-1 \}$. By  definition~\eqref{eq:typeAmatrices}, $\alpha_i(H) = 2^i$ and $\alpha_i(H') = 2^{n + i - 1}$, 
    for $i = 1,\ldots, n-1$, so item~1 is satisfied. Also, note that $E_{i,i+1}$ is a root vector corresponding to $(e_i - e_{i+1})$, so $P$ satisfies item~2. 
    \item[\it Validation for type $B_{r}$:] Let $\frakh:=\operatorname{span}\{\Omega_{i,d+1-i}\mid 1\leq i\leq r\}$. 
    For an arbitrary $A=\sum_{i=1}^r a_i\Omega_{i,d+1-i}$ in~$\frakh$, we define $e_i(A):=\mathrm{i}a_i$, where $\mathrm{i}$ is the imaginary unit. Then, the root system is given by $\Delta=\{\pm e_i\pm e_j\mid 1\leq i<j\leq r\}\cup \{\pm e_i\mid 1\leq i\leq r\}$.  
    We choose the positive system such that $e_i-e_j,e_i+e_j\in\Delta^+$ if $i<j$ and $e_i\in\Delta^+$ for any~$i$. Then, the simple roots are $\alpha_i:=e_i-e_{i+1}$, for $1\leq i\leq r-1$, and $\alpha_r:=e_r$.  By~\ref{eq:typeBmatrices},
    $\alpha_i(H)=\mathrm{i}2^i$ and $\alpha_i(H')=\mathrm{i}2^{r+i}$, 
    for $i=1,\ldots,r$, so item~1 is satisfied. 
    Next, for $1\leq i \leq r$, let
    $$
    \begin{aligned}
    X_{\alpha_i}&:=\frac{1}{2}(\Omega_{i,i+1}-\Omega_{d-i,d+1-i})-\frac{\mathrm{i}}{2}(\Omega_{i,d-i}+\Omega_{i+1,d+1-i}), \\
    X_{-\alpha_i}&:=\frac{1}{2}(\Omega_{i,i+1}-\Omega_{d-i,d+1-i}) +\frac{\mathrm{i}}{2}(\Omega_{i,d-i}+\Omega_{i+1,d+1-i}). 
    \end{aligned}
    $$
    It follows directly from computation that $X_{\alpha_i}$ and $X_{-\alpha_i}$ are root vectors corresponding to $\alpha_i$ and $-\alpha_i$, respectively. Moreover, the matrix $P$ given in~\eqref{eq:typeBmatrices} satisfies $P = \sum_{i = 1}^r (X_{\alpha_i} + X_{-\alpha_i})$. 
    \item[\it Validation for type $C_{r}$:] Let $\frakh$ be the space of matrices $A=\sum_{i=1}^r a_i(E_{ii}-E_{i+r,i+r})$, and let $e_i(A):=a_i$. 
    Then, $\Delta=\{\pm e_i\pm e_j\mid 1\leq i<j\leq r\}\cup\{\pm2e_i\mid 1\leq i\leq r\}$. We choose the positive system such that $e_i-e_j,e_i+e_j\in\Delta^+$ if $i<j$ and $2e_i\in\Delta^+$ for any~$i$. The simple roots are $\alpha_i := e_i-e_{i+1}$ for $1\leq i\leq r-1$ and $\alpha_r:=2e_r$. By~\ref{eq:typeCmatrices},
    $\alpha_i(H)=2^i$ and $\alpha_i(H')=2^{r+i}$, 
    for $i=1,\ldots,r$, so item~1 is satisfied. Let
    $$
    \begin{aligned}
    X_{\alpha_i}&:=E_{i,i+1}-E_{r+i+1,r+i},&
    X_{-\alpha_i}&:=E_{i+1,i}-E_{r+i,r+i+1},
    \quad \mbox{for } 1\leq i<r,\\
    X_{\alpha_r}&:=E_{r,2r},&
    X_{-\alpha_r}&:=E_{2r,r}.
    \end{aligned}
    $$
    By computation, $X_{\alpha_i}$ and $X_{-\alpha_i}$ are root vectors corresponding to $\alpha_i$ and $-\alpha_i$, respectively. Moreover, the matrix $P$ given in~\eqref{eq:typeCmatrices} satisfies $P=\sum_{i=1}^r(X_{\alpha_i}+X_{-\alpha_i})$.
    \item[\it Validation for type $D_{r}$:] 
    Similar to type $B_r$, we let $\frakh:=\operatorname{span}\{\Omega_{i,d+1-i}\mid 1\leq i\leq r\}$. For $A=\sum_{i=1}^r a_i\Omega_{i,d+1-i}\in\frakh$, let $e_i(A):=\mathrm{i}a_i$. Then, $\Delta=\{\pm e_i\pm e_j\mid 1\leq i<j\leq r\}$. We choose the positive system such that $e_i-e_j,e_i+e_j\in\Delta^+$ if $i<j$. The simple roots are $\alpha_i:=e_i-e_{i+1}$, for $1\leq i \leq r - 1$, and $\alpha_r:=e_{r-1}+e_r$. 
    By~\ref{eq:typeDmatrices},
    $\alpha_i(H)=\mathrm{i}2^i$ and $\alpha_i(H')=\mathrm{i}2^{r+i}$, 
    for $i=1,\ldots,r$, so item~1 is satisfied. 
    Next, for $1\leq i<r$, let
    $$
    \begin{aligned}
    X_{\alpha_i}&:=\frac{1}{2}(\Omega_{i,i+1}-\Omega_{d-i,d+1-i})-\frac{\mathrm{i}}{2}(\Omega_{i,d-i}+\Omega_{i+1,d+1-i}), \\
    X_{-\alpha_i}&:=\frac{1}{2}(\Omega_{i,i+1}-\Omega_{d-i,d+1-i}) +\frac{\mathrm{i}}{2}(\Omega_{i,d-i}+\Omega_{i+1,d+1-i}),
    \end{aligned}
    $$
    and let
    $$
    \begin{aligned}
    X_{\alpha_r}& :=\frac{1}{2}(\Omega_{r-1,r+1}-\Omega_{r,r+2}) - \frac{\mathrm{i}}{2}(\Omega_{r-1,r}+\Omega_{r+1,r+2}), \\
    X_{-\alpha_r} &:=\frac{1}{2}(\Omega_{r-1,r+1}-\Omega_{r,r+2}) + \frac{\mathrm{i}}{2}(\Omega_{r-1,r}+\Omega_{r+1,r+2}). 
    \end{aligned}
    $$
    It follows from computation that $X_{\alpha_i}$ and $X_{-\alpha_i}$ are root vectors corresponding to $\alpha_i$ and $-\alpha_i$, respectively, and that the matrix $P$ given in~\eqref{eq:typeDmatrices} satisfies $P=\sum_{i=1}^r(X_{\alpha_i}+X_{-\alpha_i})$. 
\end{description}
This completes the proof.
\end{proof}

\section{The third vector field}\label{sec:third}
In this section, we construct the third vector field~$W_3$ and prove Theorem~\ref{thm:main}. We start with the following lemma:

\begin{lemma}\label{lem:matrixQ}
    There exists a nonzero~$Q\in \frakg$ such that $Q^2 = 0$.  
\end{lemma}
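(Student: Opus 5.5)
The plan is to use a root vector. In each case, the proof of Proposition~\ref{prop:firsttwo} already supplies root vectors $X_{\alpha_i}\in\frakg$ for $\frakh$, so I will take $Q:=X_{\alpha}$ for a suitable root $\alpha$ and check $Q^2=0$ as a matrix product in $\C^{d\times d}$.

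The abstract reason this works is as follows. For $\alpha\neq 0$, the operator $X_\alpha$ maps the weight space $V_\mu$ of the defining representation $\C^d$ into $V_{\mu+\alpha}$. Hence $X_\alpha^2$ maps $V_\mu$ into $V_{\mu+2\alpha}$. For the classical groups, the weights of the defining representation are
\[
\pm e_i \quad\text{and, in type } B_r,\ 0.
\]
Choose $\alpha = e_1 - e_2$. The only weight $\mu$ for which $\mu$ and $\mu+2(e_1-e_2)$ are both weights is $\mu = e_2 - e_1$, and that is not a weight when $d\geq 3$. So $X_\alpha^2 = 0$.

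The concrete checks are short.
\begin{itemize}
\item Type $A_{n-1}$: take $Q=E_{12}$. Then $Q\in\mathfrak{sl}_n(\C)$ and $E_{12}^2=0$.
\item Type $C_r$: take $Q=E_{r,2r}=X_{\alpha_r}$. It lies in $\mathfrak{sp}_{2r}(\C)$ by Proposition~\ref{prop:firsttwo}, and its square is zero because $r\neq 2r$.
\item Types $B_r$ and $D_r$: take $Q=X_{\alpha_1}$ from the proof of Proposition~\ref{prop:firsttwo}. This element lies in $\mathfrak{so}_d(\C)$, since it is a combination of the skew-symmetric matrices $\Omega_{ij}$. Its square is checked by expanding the products of the four terms $\Omega_{12}$, $\Omega_{d-1,d}$, $\Omega_{1,d-1}$, $\Omega_{2,d}$, using $\Omega_{ij}\Omega_{kl}=\delta_{jk}E_{il}-\delta_{jl}E_{ik}-\delta_{ik}E_{jl}+\delta_{il}E_{jk}$.
\end{itemize}

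The only step needing care is this last expansion. To avoid sign errors, I would first carry out the weight-space argument above in types $B$ and $D$, which already gives $Q^2=0$. The explicit matrix computation then serves only as a cross-check; it uses $d\geq 5$, so that the indices $1,2,d-1,d$ are distinct.

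Nonzero-ness is immediate in every case, since root vectors are nonzero.
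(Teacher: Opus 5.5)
Your proposal is correct. Your route differs from the paper's in two places: the choice of $Q$ in type $C$, and how $Q^2=0$ is justified in types $B$ and $D$. In types $A$, $B$, $D$ you pick exactly the paper's $Q$, namely $E_{12}$ and $X_{\alpha_1}$ respectively.

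In type $C$, the paper uses the short-root vector $E_{12}-E_{r+2,r+1}$ and must treat $r=1$ separately with $E_{12}$. Your long-root vector $X_{\alpha_r}=E_{r,2r}$ works uniformly for all $r\ge 1$. Its membership in $\mathfrak{sp}_{2r}(\C)$ is immediate, since it sits on the diagonal of the upper-right block, which only needs to be symmetric.

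In types $B$ and $D$, the paper writes $Q=\frac12U-\frac{\mathrm{i}}{2}V$ and checks $U^2=V^2$ and $UV+VU=0$ directly. That check is self-contained. It does not use the fact, only asserted ``by computation'' in the proof of Proposition~\ref{prop:firsttwo}, that $X_{\alpha_1}$ is a root vector for $e_1-e_2$. Your weight-space argument does rely on that fact, and on the weights of $\C^d$ being $\pm e_i$ (plus $0$ in type $B$). In return it explains conceptually why the square vanishes. It also applies to any root $\alpha$ for which $2\alpha$ is not a difference of two weights of the defining representation.

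One sentence in your weight argument needs repair. You say ``the only weight $\mu$'' with the required property is $e_2-e_1$, and then that this is not a weight when $d\ge 3$. This is garbled: $e_2-e_1$ is never a weight of $\C^d$, and the condition $d\ge 3$ plays no role. The clean reason is as follows. Suppose $\mu+2(e_1-e_2)=\nu$ with $\nu\in\{\pm e_k\}\cup\{0\}$. Then $\mu=\nu-2e_1+2e_2$ has nonzero coefficients on both $e_1$ and $e_2$, because the $e_i$ are linearly independent on $\frakh$. Every weight has at most one nonzero coefficient, so $\mu$ is not a weight. Hence $Q^2$ sends every weight space to $0$, and $Q^2=0$.
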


\begin{proof}
We exhibit the desired matrix $Q$ for each type of~$\frakg$: 
\begin{description}
    \item[\it Type $A_{n-1}$ for $n \geq 2$:] Let $Q:= E_{12}$. It is clear that $Q^2 = 0$.
    \item[\it Type $B_{r}$ for $r \geq 2$ and type $D_r$ for $r \geq 3$:] For either case, let 
    \begin{equation}\label{eq:defQ}
    Q:= X_{\alpha_1} = \frac{1}{2}(\Omega_{1,2}-\Omega_{d-1,d})-\frac{\mathrm{i}}{2}(\Omega_{1,d-1}+\Omega_{2,d}) = : \frac{1}{2}U -\frac{\mathrm{i}}{2} V.
    \end{equation}
    By computation, $U^2 = V^2$ and $UV + VU = 0$, so $Q^2 = 0$.
    \item[\it Type $C_r$ for $r \geq 1$:] Let 
    $$
    Q:= 
    \begin{cases}
    E_{12} & \mbox{if } r = 1, \\
    E_{12} - E_{r + 2, r+1} & \mbox{if } r > 1.
    \end{cases}
    $$ 
    We have that $Q^2 = 0$. 
\end{description}
This completes the proof. 
\end{proof}

\begin{remark}
    The Lie algebra $\mathfrak{so}_3(\C)$ does not have any nonzero element $Q$ such that $Q^2 = 0$. 
\end{remark}

Let $\chi:G\to \C$ be the character of the standard representation, i.e., $\chi(x):=\tr(x)$. With the nonzero $Q\in \frakg$ given in Lemma~\ref{lem:matrixQ}, we define 
\begin{equation}\label{eq:third}
W_3:=\chi L_Q.
\end{equation}
It is clear that $W_3\in\xalg(G)$. We have

\begin{lemma}
The vector field $W_3$ is $\C$-complete.
\end{lemma}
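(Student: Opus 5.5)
The strategy is to show that $\chi$ is a first integral of $L_Q$. Then $W_3=\chi L_Q$ is a constant multiple of $L_Q$ along each integral curve, and its flow can be written down explicitly.

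First I compute the derivative of $\chi$ along $L_Q$. Since $\chi$ is linear in $x$, we have $L_Q(\chi)(x)=\tr(xQ)$. So the curve $x(t)=x e^{sQ}$ carries the quantity
$$
\chi(xe^{sQ})=\tr\bigl(x(I+sQ)\bigr)=\chi(x)+s\,\tr(xQ),
$$
where we used $Q^2=0$ from Lemma~\ref{lem:matrixQ}, which gives $e^{sQ}=I+sQ$. Thus $\chi$ is not invariant in general; it changes affinely along orbits of $L_Q$. I next compute the second derivative. Along $L_Q$, the function $\tr(xQ)$ has derivative $\tr(xQ^2)=0$. Hence $\tr(xQ)$ is a first integral of $L_Q$, and therefore also of $W_3=\chi L_Q$.

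Now I solve the ODE $\dot x=\chi(x)\,xQ$ with $x(0)=x_0$. Set $c:=\tr(x_0Q)$, which stays constant along the flow. I look for a solution of the form $x(t)=x_0(I+s(t)Q)$ with $s(0)=0$. Then $\dot x=\dot s\,x_0Q$, while
$$
\chi(x)\,xQ=\bigl(\chi(x_0)+s\,c\bigr)\,x_0(I+sQ)Q=\bigl(\chi(x_0)+s\,c\bigr)\,x_0Q,
$$
again using $Q^2=0$. So it suffices that $s$ solve the scalar linear ODE $\dot s=\chi(x_0)+c\,s$ with $s(0)=0$. Its solution is
$$
s(t)=\chi(x_0)\,\frac{e^{ct}-1}{c}\qquad (s(t)=\chi(x_0)t \text{ if } c=0),
$$
which is entire in $t\in\C$.

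Finally I check that the curve stays in $G$. The point $x_0(I+s(t)Q)=x_0e^{s(t)Q}$ lies in $G$, because $e^{sQ}\in G$ for $Q\in\frakg$. By uniqueness of solutions, this curve is the integral curve through $x_0$. It is defined for all $t\in\C$, and since $(e^{ct}-1)/c$ is entire in $(c,t)$ it depends holomorphically on $x_0$. Hence $W_3$ is $\C$-complete.

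The only delicate point is noticing that $\chi$ itself is not conserved, while $\tr(xQ)$ is. This is where $Q^2=0$ enters a second time, and it is what makes the scalar ODE linear rather than Riccati-type, so that no finite-time blow-up occurs.
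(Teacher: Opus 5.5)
Your proof is correct and is essentially the paper's. The paper checks the overshear condition $L_Q^2\chi=\tr(xQ^2)=0$, which is your observation that $\tr(xQ)$ is a first integral of $L_Q$, cites T\'oth--Varolin, and then writes down exactly your flow $x(I+\tau(t,x)Q)$ with $\tau=\tr(x)\,(e^{t\tr(xQ)}-1)/\tr(xQ)$; you derive this flow directly from the linear scalar ODE instead of citing the criterion. One correction: your opening sentence wrongly claims that $\chi$ is a first integral of $L_Q$ and that $W_3$ is a constant multiple of $L_Q$ along orbits. Both claims are false, as you note yourself later, but the argument never uses them.
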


\begin{proof}
It is known~\cite[Proposition~2.3]{TV06} that a vector field $\phi V$ on a complex manifold, with $V$ a $\C$-complete holomorphic vector field and $\phi$ a holomorphic function, is $\C$-complete if and only if it is an {\it overshear}, i.e., $V^2\phi = 0$. 
In our case, $L_Q$ is $\C$-complete and holomorphic and $\chi$ is holomorphic, so it suffices to show that $L_Q^2\chi = 0$. But this directly follows from Lemma~\ref{lem:matrixQ}; indeed, $(L_Q^2\chi)(x) = \tr(xQ^2) = 0$. In fact, one can express the flow $e^{tW_3}$ explicitly as 
$$
e^{tW_3}(x) = x(I + \tau(t,x)Q),
$$
where 
$$
  \tau(t,x)=
  \begin{cases}
    \displaystyle \frac{e^{t\tr(xQ)}-1}{\tr(xQ)} \tr(x),& \mbox{if }\tr(xQ)\neq 0,\\
    t\tr(x),& \mbox{if } \tr(xQ) = 0.
  \end{cases}
$$
The above holds for all $t\in \C$ and for all $x\in G$. 
\end{proof}

Let $x_{ij}$ be the $ij$th entry of $x\in G$, viewed as a holomorphic function on~$G$. 
Let $\Phi$ be the space spanned by $x_{ij}$, for $ 1\leq i, j\leq d$. It is clear that $\Phi$ is closed under $\frakg_L \oplus\frakg_R$. 
Let $\Phi\frakg_L\subseteq \xalg(G)$  
be the space of vector fields spanned by $\phi L_X$ for $\phi\in\Phi$ and $X\in\frakg$. 
The next result is key to establishing Theorem~\ref{thm:main}.   Its proof is similar to that of~\cite[Theorem~1]{Real}. In fact, irreducibility of the standard representation and simplicity 
of $\frakg$ make the arguments simpler. For completeness of the presentation, we include a relatively short proof of the lemma: 

\begin{lemma}\label{lem:degreeone}
It holds that $\Phi\frakg_L\subseteq\operatorname{Lie}\{W_1,W_2,W_3\}$.
\end{lemma}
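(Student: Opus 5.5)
Let $\mathcal L:=\operatorname{Lie}\{W_1,W_2,W_3\}$. By Proposition~\ref{prop:firsttwo}, $\frakg_L\oplus\frakg_R\subseteq\mathcal L$, so $\mathcal L$ is a module over $\frakg_L\oplus\frakg_R$ under the bracket. The plan is to use these module operations to move from the single element $W_3=\chi L_Q$ to all of $\Phi\frakg_L$. The basic computations are, for $X,Y\in\frakg$ and $\phi\in\Phi$,
$$
[L_X,\phi L_Y]=(L_X\phi)L_Y+\phi L_{[X,Y]},\qquad [R_X,\phi L_Y]=(R_X\phi)L_Y .
$$
Here $(L_X\phi)(x)=$ the derivative of $\phi$ in the direction $xX$, and $(R_X\phi)(x)=$ the derivative in the direction $Xx$. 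Hence $\Phi\frakg_L$ is stable under $\frakg_L\oplus\frakg_R$. As such a module, it is isomorphic to $\Phi\otimes\frakg$, with $\frakg_R$ acting only on the $\Phi$ factor and $\frakg_L$ acting diagonally.

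\textbf{Step 1 (vary the function via $\frakg_R$).} I would identify $\Phi$ with $\C^{d\times d}\cong \C^d\otimes(\C^d)^*$ via $\phi_A(x)=\tr(Ax)$, so that $\chi=\phi_I$. Then $R_X\phi_A=\phi_{AX}$ and $L_X\phi_A=\phi_{XA}$.

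Since $\chi=\phi_I$ is annihilated by neither side, repeated brackets $[R_{X_1},[R_{X_2},\dots,W_3]]$ yield $\phi_{X_k\cdots X_1}L_Q$. The associative algebra generated by $\frakg\subseteq\C^{d\times d}$ is all of $\C^{d\times d}$, because the standard representation is irreducible (Burnside's theorem). Therefore the span of these brackets together with $\chi L_Q$ is $\Phi L_Q$.

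I must double-check that the identity-free word case is harmless. Words of length $\ge 1$ plus $I$ itself already span the associative algebra, so we obtain all of $\Phi\cdot L_Q$.

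\textbf{Step 2 (vary the vector field via $\frakg_L$).} Now bracket with $L_X$:
$$
[L_X,\phi_A L_Q]=\phi_{XA}L_Q+\phi_A L_{[X,Q]}.
$$
The first term already lies in $\mathcal L$ by Step 1, so $\phi_A L_{[X,Q]}\in\mathcal L$ for all $A$. Iterating, $\phi_A L_Y\in\mathcal L$ for every $Y$ in the $\operatorname{ad}$-submodule generated by $Q$. This submodule is an ideal of $\frakg$ containing $Q\ne 0$, so simplicity of $\frakg$ gives all of $\frakg$. Hence $\Phi\frakg_L\subseteq\mathcal L$.

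\textbf{Main obstacle.} The hardest point is Step 1. One must check that the $R$-action really generates all of $\Phi$ from $\chi$, in the sense that the $\frakg$-submodule of $\C^{d\times d}$ under right multiplication generated by $I$ is everything. Right multiplication by the associative envelope, which is $\C^{d\times d}$ by Burnside, does this.

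The subtlety is that brackets only give words of length $\ge1$, and linear combinations of words of length $\ge1$ may not include $I$. This is fine, since $I$ itself corresponds to $W_3$. One also needs irreducibility of the standard representation for each type, including $\mathrm{SO}_n$ with $n\ge5$ and $\mathrm{Sp}_{2r}$, which is standard.
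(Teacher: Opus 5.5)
Your proposal is correct and follows essentially the same route as the paper: brackets of $W_3=\chi L_Q$ with right-invariant fields, plus Burnside's theorem, give $\Phi L_Q$. The set of $X$ with $\Phi L_X$ in the Lie algebra is then a nonzero ideal, hence all of $\frakg$ by simplicity. The paper works with the span $S$ of iterated $\frakg_L\oplus\frakg_R$-brackets of $W_3$ rather than the full Lie algebra, which is only a bookkeeping difference.
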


\begin{proof}
Let $S$ be the subspace of $\xalg(G)$ spanned by  $\operatorname{ad}_{V_m}\cdots \operatorname{ad}_{V_1} W_3$, 
for $V_i\in\frakg_L\oplus\frakg_R$ and for $m\geq 0$ (by default, the Lie product is simply $W_3$ for $m = 0$).  
By Proposition~\ref{prop:firsttwo}, it suffices to show that $\Phi\frakg_L\subseteq S$. First, using the fact that the left-invariant vector fields commute with the right-invariant ones, we obtain that 
\begin{equation}\label{eq:rightonW3}
    \operatorname{ad}_{R_{X_m}}\cdots\operatorname{ad}_{R_{X_1}}W_3
    =(R_{X_m}\cdots R_{X_1}\chi)L_Q.
\end{equation}
Note that 
\begin{equation}\label{eq:rightonchi}
(R_{X_m}\cdots R_{X_1}\chi)(x) = \tr(x X_1\cdots X_m).
\end{equation}
Let $\mathcal{A}(\frakg)$ be the unital algebra generated by the matrices of $\frakg$. 
Since the standard representation $\C^d$ of $\frakg$ is irreducible, Burnside's theorem~\cite{Burnside} implies that $\mathcal{A}(\frakg)$ is the space of all $d$-by-$d$ complex matrices. Combining the above arguments with~\eqref{eq:rightonW3} and~\eqref{eq:rightonchi}, we obtain that $\Phi L_Q\subseteq S$. 
Now, consider the set $\mathfrak{i}:=\{X\in\frakg \mid \Phi L_X\subseteq S\}$.  
It is clear from the definition that $\mathfrak{i}$ is a linear subspace of~$\frakg$. Also, we have shown that $Q\in \mathfrak{i}$, so $\mathfrak{i}$ is nontrivial. 
For any $X\in\mathfrak{i}$, any $Y\in\frakg$, and any $\phi\in\Phi$, 
$$[L_Y,\phi L_X]=(L_Y\phi)L_X+\phi L_{[Y,X]}.$$
Since $S$ is closed under~$\frakg_L$, the left hand side belongs to~$S$. Also, $L_Y\phi\in\Phi$ and $X\in\mathfrak{i}$, so $(L_Y\phi)L_X\in S$. It follows that $\phi L_{[Y,X]}\in S$ for every $\phi\in\Phi$.
Thus, $[Y,X]\in\mathfrak{i}$, and $\mathfrak{i}$ is a nonzero ideal of the simple Lie algebra~$\frakg$. Therefore, $\mathfrak{i}=\frakg$, and $\Phi\frakg_L\subseteq S\subseteq\operatorname{Lie}\{W_1,W_2,W_3\}$.
\end{proof}

\begin{remark}
For the Lie group $\mathrm{SO}_4(\C)$, one can still define the matrix $Q$ as in~\eqref{eq:defQ} and hence, the vector field $W_3$, which is $\C$-complete. It is, however, the case where $Q$ belongs to a nonzero simple ideal of $\mathfrak{so}_4(\C)$. Consequently, the arguments at the end of the proof of Lemma~\ref{lem:degreeone} no longer hold.    
\end{remark}

For each integer $n\geq 2$, let $\mathcal{A}_n(\Phi)$ be the space spanned by the products $\phi_1\cdots\phi_n$, with $\phi_i\in\Phi$ and $\mathcal{L}_n$ be the space of right-normed Lie products $\operatorname{ad}_{V_n} \cdots \operatorname{ad}_{V_2} V_1 $ of depth~$n$, with $V_i\in \Phi \frakg_L$. We have the following lemma:

\begin{lemma}\label{lem:degreegrow}
For any $n\geq 2$, $\mathcal{L}_n = \mathcal{A}_n(\Phi)\frakg_L$. 
\end{lemma}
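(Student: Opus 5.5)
Both inclusions will be proved by induction on $n$, and the Lie bracket formula for $\Phi\frakg_L$ does all the work. For $\phi,\psi\in\Phi$ and $X,Y\in\frakg$,
\[
[\phi L_X,\psi L_Y]=\phi(L_X\psi)L_Y-\psi(L_Y\phi)L_X+\phi\psi L_{[X,Y]} .
\]
Since $\Phi$ is closed under $\frakg_L$, both $L_X\psi$ and $L_Y\phi$ lie in $\Phi$. If instead $\phi\in\mathcal{A}_{n-1}(\Phi)$, then $L_Y\phi\in\mathcal{A}_{n-1}(\Phi)$ by the Leibniz rule. The right-hand side therefore lies in $\mathcal{A}_n(\Phi)\frakg_L$. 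This gives $\mathcal{L}_n\subseteq\mathcal{A}_n(\Phi)\frakg_L$ at once by induction, using $\mathcal{L}_n=[\Phi\frakg_L,\mathcal{L}_{n-1}]$.

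For the reverse inclusion $\mathcal{A}_n(\Phi)\frakg_L\subseteq\mathcal{L}_n$, the goal is to show that $\phi\psi L_Z\in\mathcal{L}_n$ for every $\phi\in\Phi$, every $\psi\in\mathcal{A}_{n-1}(\Phi)$ and every $Z\in\frakg$. The inductive hypothesis is $\mathcal{A}_{n-1}(\Phi)\frakg_L\subseteq\mathcal{L}_{n-1}$, with $\mathcal{L}_1:=\Phi\frakg_L$ as the base case. The idea is to isolate the term $\phi\psi L_{[X,Y]}$ in the bracket formula by cancelling the two "derivative" terms. Because $\frakg$ is simple, $[\frakg,\frakg]=\frakg$, so it suffices to produce $\phi\psi L_{[X,Y]}$ for arbitrary $X,Y$ and then sum. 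Three tools are available for this:
\begin{enumerate}
\item Brackets with elements of $\frakg_L\oplus\frakg_R$. These are not in $\Phi\frakg_L$, so they are not directly permitted in $\mathcal{L}_n$.
\item Brackets with the constant function. This is not available either, because $1\notin\Phi$.
\item Linearity in $X$ and $Y$ together with suitable choices of them.
\end{enumerate}
The main option is therefore the third. Choose $Y=Z$ arbitrary and take $X=H_0$ in a Cartan subalgebra, so that one term becomes a multiple of $\psi$ via weight vectors. Then form combinations of $[\phi L_X,\psi L_Y]$ and $[\phi L_Y,\psi L_X]$ (both lie in $\mathcal{L}_n$ by induction) to eliminate the unwanted terms.

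A cleaner route I would try first exploits the fact that $\mathcal{L}_n$ is a $\frakg_L\oplus\frakg_R$-module. By the Jacobi identity, $\operatorname{ad}_{L_Y}$ and $\operatorname{ad}_{R_Y}$ preserve $\Phi\frakg_L$, and hence they preserve each $\mathcal{L}_n$. Combined with the first inclusion, $\mathcal{L}_n$ is then a $\frakg_L\oplus\frakg_R$-submodule of $\mathcal{A}_n(\Phi)\frakg_L$. So it is enough to exhibit generating elements. The right action $R_X$ acts only on the function factor, through $x\mapsto Xx$ on the row index. Using Burnside's theorem as in Lemma~\ref{lem:degreeone}, the right action generates all of $\mathcal{A}(\frakg)$ acting on the left index of the functions.

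Concretely, I would pick $\phi L_Q$ and $\psi L_Q$ with the nilpotent $Q$ of Lemma~\ref{lem:matrixQ}. Their bracket is $\bigl(\phi\,(L_Q\psi)-\psi\,(L_Q\phi)\bigr)L_Q$. Choose $\phi=\operatorname{tr}(xA)$ with $L_Q\phi=\operatorname{tr}(xQA)$, and $\psi\in\mathcal{A}_{n-1}(\Phi)$ annihilated by $L_Q$; such a $\psi$ exists, for instance $\operatorname{tr}(xQB)$ products, since $Q^2=0$. Then $\psi\operatorname{tr}(xQA)L_Q$ is obtained. Acting by $\frakg_R$ and using Burnside lets $A$ range over all matrices, and then Leibniz/polarization spreads this to all products. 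Finally the ideal argument of Lemma~\ref{lem:degreeone} upgrades $L_Q$ to all $L_X$.

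The main obstacle is this generation step: showing that the $\frakg_L\oplus\frakg_R$-module generated by these special elements is all of $\mathcal{A}_n(\Phi)\frakg_L$. Doing so requires controlling the left-index action through $\frakg_L$, which is also non-trivial because $\frakg_L$ acts on the vector field part as well. I expect to handle it by induction on $n$, combining the two-sided translation structure $\Phi\cong\C^d\otimes\C^d$ with irreducibility of the standard representation.
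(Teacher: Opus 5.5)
Your inclusion $\mathcal{L}_n\subseteq\mathcal{A}_n(\Phi)\frakg_L$ is correct. So is your observation that each $\mathcal{L}_n$ is stable under $\operatorname{ad}_{L_Y}$ and $\operatorname{ad}_{R_Y}$. The gap is the reverse inclusion, which is the entire content of the lemma. You identify the generation step as ``the main obstacle'' and leave it open, and the two devices you propose for it do not work as stated.

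First, combining the two brackets cannot isolate $\phi\psi L_{[X,Y]}$. For any $a,b$,
\[
a[\phi L_X,\psi L_Y]+b[\phi L_Y,\psi L_X]=\bigl(a\,\phi\,(L_X\psi)-b\,\psi\,(L_X\phi)\bigr)L_Y+\bigl(b\,\phi\,(L_Y\psi)-a\,\psi\,(L_Y\phi)\bigr)L_X+(a-b)\,\phi\psi L_{[X,Y]} .
\]
Taking $a=b$ cancels exactly the term you want. Otherwise the derivative terms survive for general $\phi,\psi$: even with $X=H_0$ and weight vectors, the coefficient $b\,\phi\,(L_Y\psi)-a\,\psi\,(L_Y\phi)$ of $L_{H_0}$ remains.

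Second, in the $Q$-route you obtain $\psi\,\tr(xQA)\,L_Q$ with $L_Q\psi=0$. Since $R_Y\tr(xM)=\tr(xMY)$ and $R_Y$ commutes with $L_Q$, the $\frakg_R$-action preserves both constraints. The new factor stays of the form $\tr(xM)$ with $\operatorname{im}M\subseteq\operatorname{im}Q$, and the cofactor stays in $\ker L_Q$. Letting $A$ range over all matrices via Burnside does not let $QA$ do so, and polarization cannot leave this $\frakg_R$-stable span. For instance, take $G=\mathrm{SL}_d(\C)$, $Q=E_{12}$ and $n=2$. Everything produced this way, even after the $\frakg_R$-action, is $fL_Q$ with $f\in\operatorname{span}\{x_{j1}x_{ik}\mid k\neq 2\}$, which misses $x_{12}x_{22}L_Q$. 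The ideal argument of Lemma~\ref{lem:degreeone} would finish the job only once the \emph{full} space $\mathcal{A}_n(\Phi)L_Q$ lies in $\mathcal{L}_n$. It needs a $\frakg_L$-stable function space to separate $[L_Y,fL_Q]=(L_Yf)L_Q+fL_{[Y,Q]}$.

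The missing idea is the one property the paper's proof actually uses. The paper does not rebuild the reverse inclusion. It invokes \cite[Theorem~2]{Real}, which gives $\mathcal{L}_n=\mathcal{A}_n(\Phi)\frakg_L$ whenever $\Phi$ contains no nonzero constant function, and its own work is verifying that condition:
\begin{itemize}
\item if $\phi_A(x)=\tr(xA)$ is constant, then $R_X\phi_A=\phi_{AX}=0$ for all $X\in\frakg$;
\item since $\operatorname{span}G=\C^{d\times d}$ by Burnside's theorem, this forces $AX=0$ for all $X\in\frakg$;
\item so $A$ vanishes on $\frakg\C^d$, which is a nonzero $\frakg$-invariant subspace and hence equals $\C^d$.
\end{itemize}
Your proposal mentions $1\notin\Phi$ only as an unavailable tool and never uses it as an input. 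Equivalently, you never use that the $\frakg_L$-module $\Phi$ has no trivial constituent. To close the gap, you must either verify this condition and appeal to that theorem, as the paper does, or give an actual argument, using it, that $[\Phi\frakg_L,\mathcal{A}_{n-1}(\Phi)\frakg_L]$ exhausts $\mathcal{A}_n(\Phi)\frakg_L$.
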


\begin{proof}
This proof builds directly upon~\cite[Theorem~2]{Real} and its proof, which states that if $\Phi$ does not contain any nonzero constant function, then $\mathcal{L}_n = \mathcal{A}_n(\Phi)\frakg_L$.
(Note that the statement of that theorem is for the real setting, yet the proof works by complexification.)   

We show below that for our case, $\Phi$ indeed has no nonzero constant function. 
First, note that each function in~$\Phi$ takes the form $\phi_A(x):=\tr(xA)$ for some matrix $A\in \C^{d\times d}$. 
Suppose that $\phi_A$ is constant; then, for any $X\in\frakg$, $R_X\phi_A=\phi_{AX}=0$. 
Since $\C^d$ is irreducible under~$G$, we use again Burnside's theorem~\cite{Burnside} to have that the unital algebra $\mathcal{A}(G)$ generated by elements of~$G$ is $\C^{d\times d}$. 
Since $G$ is itself a group, $\mathcal{A}(G)$ is simply its linear span. 
It then follows that $\operatorname{span}G=\C^{d\times d}$. Thus, 
$$
\phi_{AX}(x) = \tr(xAX) = 0 \quad \mbox{for all } x\in G \quad \Rightarrow \quad AX = 0,
$$
which holds for any $X\in\frakg$. Now, let $U:=\operatorname{span}\{Xv \mid X\in\frakg,\ v\in\C^d\}$. 
The space $U$ is nonzero because the action $(X, v)\mapsto Xv$ is nontrivial. Moreover, for any $X,Y\in\frakg$ and any $v\in\C^d$, $Y(Xv)=[Y,X]v+X(Yv)\in U$, so $U$ is $\frakg$-invariant. 
Thus, by irreducibility, $U=\C^d$. Since $A$ vanishes on~$U$, we conclude that $A=0$. Therefore, $\Phi$ does not contain any nonzero constant function.
\end{proof}

With the results above, we now prove Theorem~\ref{thm:main}:

\begin{proof}[Proof of Theorem~\ref{thm:main}]
By construction, the three vector fields $W_1$, $W_2$, and $W_3$ are polynomial and $\C$-complete. 
Let $\mathcal{A}(\Phi)$ be the unital algebra generated by elements of $\Phi$. Then, by Proposition~\ref{prop:firsttwo}, Lemma~\ref{lem:degreeone}, and Lemma~\ref{lem:degreegrow}, we have that $\mathcal{A}(\Phi)\frakg_L \subseteq \operatorname{Lie}\{W_1,W_2,W_3\}$. 

It remains to identify $\mathcal{A}(\Phi)\frakg_L$ with~$\xalg(G)$. It is clear that $\mathcal{A}(\Phi)\frakg_L \subseteq \xalg(G)$. We establish below the reverse inclusion. Let $V\in \xalg(G)$. For any $x\in G$, we have that $x^{-1}V(x)\in\frakg$. 
Since $G$ is a subgroup of $\mathrm{SL}_d(\C)$, $x^{-1}=\operatorname{adj}(x)$, the entries of which are thus polynomials.  
Let $X_1,\ldots,X_m$ be a basis of~$\frakg$. Then, there exist
$a_1,\ldots,a_m\in\mathcal{A}(\Phi)$ such that $x^{-1}V(x)=\sum_{i=1}^m a_i(x)X_i$, and hence $V=\sum_{i=1}^m a_iL_{X_i}\in\mathcal{A}(\Phi)\frakg_L$.
\end{proof}

\printbibliography

@article{Andrist,
 author = {Andrist, Rafael B.},
 title = {Integrable Generators of {Lie} Algebras of Vector Fields on {$\mathbb C^n$}},
 journal = {Forum Mathematicum}, volume = {31}, number = {4},
 pages = {943--949}, year = {2019},
 doi = {10.1515/forum-2018-0204}
}

@article{And26,
  author  = {Andrist, Rafael B.},
  title   = {On complete generators of certain Lie algebras on
             {Danielewski} surfaces},
  journal = {International Journal of Mathematics},
  volume  = {37},
  number  = {9},
  pages   = {2650045},
  year    = {2026},
  doi     = {10.1142/S0129167X2650045X}
}

@article{AndristSL,
  author={Andrist, Rafael B.},
  title={Integrable Generators of {Lie} Algebras of Vector Fields on {$\mathrm{SL}_2(\mathbb C)$} and on {$xy=z^2$}},
  journal={The Journal of Geometric Analysis},
  volume={33}, number={8}, pages={240}, year={2023},
  doi={10.1007/s12220-023-01294-x}
}

@misc{BeldievNew,
  author = {Beldiev, Ivan and Pogudin, Gleb},
  title={The {Lie} Algebra of Polynomial Vector Fields on the Affine Space with Constant Divergence is {$1.5$-generated}},
  note = {arXiv:2608.13798}, year = {2026}
}

@article{TV,
 author={T{\'o}th, {\'A}rp{\'a}d and Varolin, Dror},
 title={Holomorphic Diffeomorphisms of Complex Semisimple {Lie} Groups},
 journal={Inventiones Mathematicae}, volume={139}, number={2},
 pages={351--369}, year={2000}, doi={10.1007/s002229900029}
}

@article{TV06,
 author = {T{\'o}th, {\'A}rp{\'a}d and Varolin, Dror},
 title = {Holomorphic Diffeomorphisms of Semisimple Homogeneous Spaces},
 journal = {Compositio Mathematica}, volume = {142}, number = {5},
 pages = {1308--1326}, year = {2006}, doi = {10.1112/S0010437X06002077}
}

@article{Burnside,
 author = {Lomonosov, Victor and Rosenthal, Peter},
 title = {The Simplest Proof of {Burnside's} Theorem on Matrix Algebras},
 journal = {Linear Algebra and its Applications},
 volume = {383},
 pages = {45--47},
 year = {2004},
 doi = {10.1016/j.laa.2003.08.012}
}

@misc{Real,
 author={Jiang, Yiyang and Chen, Xudong},
 title={Integrable Generators of Algebraic Vector Fields for Real Connected Semi-simple Matrix Groups},
 year={2026}, note={arXiv:2608.21670}
}

@book{Humphreys,
 author={Humphreys, James E.},
 title={Introduction to {Lie} Algebras and Representation Theory},
 series={Graduate Texts in Mathematics},
 volume={9},
 publisher={Springer-Verlag},
 address={New York},
 year={1972}
}

@article{Kuranishi,
  author  = {Kuranishi, Masatake},
  title   = {On Everywhere Dense Imbedding of Free Groups in {Lie} Groups},
  journal = {Nagoya Mathematical Journal},
  volume  = {2},
  pages   = {63--71},
  year    = {1951},
  doi     = {10.1017/S0027763000010059}
}

\end{document}